\theoremstyle{plain}
\newtheorem{theorem}{Theorem}[section]
\newtheorem{corollary}[theorem]{Corollary}
\newtheorem{lemma}[theorem]{Lemma}
\newtheorem{proposition}[theorem]{Proposition}
\theoremstyle{definition}
\theoremstyle{remark}
\numberwithin{equation}{section}
\numberwithin{table}{section}
\numberwithin{figure}{section}
\newcommand{\oo}[1]{\buildrel {\scriptscriptstyle\circ}\over{#1}}
\newcommand{\reel}{\mathbb{R}}
\newcommand{\ds}{\displaystyle}
\newcommand{\abs}[1]{\left\vert #1\right\vert }
\title[Ratios of Differences of Power Means ]
{Bounds for the Ratios of Differences of Power Means 
  in Two Arguments}
\author{Omran Kouba}
\address{Department of Mathematics \\
Higher Institute for Applied Sciences and Technology\\
P.O. Box 31983, Damascus, Syria.}
\email{omran\_kouba@hiast.edu.sy}
\keywords{Arithmetic Mean, Geometric Mean, Power Mean, L'Hospital Monotone Rule}
\subjclass[2000]{26E60, 26D07.}
\begin{document}

\begin{abstract}
Using methods from classical analysis, sharp bounds for the ratio of differences of Power Means are obtained. Our results 
generalize and extend previous ones due to S. Wu(2005), and
to S. Wu and L. Debnath.
\end{abstract}

\maketitle

\section{Introduction }\label{sec1}

Given two distinct positive real numbers $a$ and $b$, we recall that the arithmetic mean $A(a,b)$, the geometric mean
$G(a,b)$, the identric mean $I(a,b)$,  and finally  $M_r(a,b)$,  the power mean of order $r$,  are respectively defined by
$$
A(a,b)=\frac{a+b}{2},~~ G(a,b)=\sqrt{ab},~~ I(a,b)=\frac{1}{e}\left(\frac{a^a}{b^b}\right)^{1/(a-b)}
~\hbox{and}~ M_r(a,b)=\root{r}\of{\frac{a^r+b^r}{2}}.
$$

Inequalities relating means in two arguments attracted and continue to attract the attention of mathematicians.
Many recent papers  were concerned in comparing the differeces between known means. 

For instance, H. Alzer and S. Qui
proved in \cite{alqu} the following inequality relating the identric, geometric and arithmetic means of two distinct positive numbers $a$ and $b$ :
$$\frac{2}{3} <\frac{I(a,b)-G(a,b)}{A(a,b)-G(a,b)}<\frac{2}{e}.$$

This was later complemented by T. Trif \cite{trif} who proved that, for $p\geq2$ and every
distinct positive numbers $a$ and $b$, we have
$$\left(\frac{2}{e}\right)^p<\frac{I^p(a,b)-G^p(a,b)}{A^p(a,b)-G^p(a,b)}<\frac{2}{3}$$

In another direction we proved in \cite{kou} that
the inequality
$$\frac{I^p(a,b)-G^p(a,b)}{A^p(a,b)-G^p(a,b)}<\frac{2}{3}$$
holds true for every distinct positive numbers $a$ and $b$, if and only if $p\geq\ln\left(\frac{3}{2}\right)/\ln\left(\frac{e}{2}\right)\approx 1.3214$, and that the reverse inequality holds true for every distinct positive numbers $a$ and $b$, if and only if $p\leq6/5= 1.2$.

In the same line of ideas, S. Wu proved in \cite{wu1} a double inequality
$$2^{1-1/r}<\frac{M_r(a,b)-G(a,b)}{A(a,b)-G(a,b)}<\left(\frac{2r}{1-r}\right)^{1-1/r},$$
where $0<r<1/2$, $a$ and $b$ are distinct positive real numbers.

Later,  this inequality  was sharpened and extended by Wu and Debnath \cite{wudeb} who proved that for
 distinct positive real numbers $a$ and $b$ we have
$$2^{1-1/r}<\frac{M_r(a,b)-G(a,b)}{A(a,b)-G(a,b)}<r,$$
if $0<r<1/2$ or $r>1$, and a reverse inequality holds if $1>r>1/2$.

In this work, (see  Theorem~\ref{th31}), we will we extend and generalize this further, by finding sharp bounds,
 depending only on $s$, $t$ and $p$,  for the ratio
$$\frac{M_s^p(a,b)-G^p(a,b)}{M_t^p(a,b)-G^p(a,b)}$$
where $s$, $t$ and $p$ are real parameters satisfying some conditions. 
 
S. Wu and L. Debnath in \cite{wudeb} proved also the following inequality
$$\frac{2^{-p/r}-2^{-p/s}}{2^{-p/t}-2^{-p/s}}<\frac{M_r^p(a,b)-M_s^p(a,b)}{M_t^p(a,b)-M_s^p(a,b)}<\frac{r-s}{t-s}.$$
for every distinct positive real numbers $a$ and $b$, provided that $r>t>s>0$, $t\geq p>0$.
 In our Theorem~\ref{th33}, we will extend the domain of validity of this inequality. 

The paper is organized as follows. In section~\ref{sec2}, we gathered some preliminary lemmas and propositions, 
some of them are of interest in their own right. In section~\ref{sec3}, we find the statements and proofs of the main theorems.

\section{ Preliminaries }\label{sec2}

 Our main tool in this investigation is the following Lemma~\ref{lm21}, called the L'Hospital Monotone Rule. It was discovered,
rediscovered, and refined by several mathematicians. a good account of this can be found in \cite{and} where it is traced back
to the work of  Cheeger et al. \cite{grom}. We will include for the convenience of the reader, a proof which is -to our knowledge-different from the known ones.

\bigskip

\noindent {\bf Remark.}
 When we describe a function by saying that it is increasing, decreasing, or convex, we mean that it has this property in the {\it strict} sense. 

\begin{lemma}\label{lm21}
 {\em (L'Hospital Monotone Rule.)}~ Let $I$ be an interval in $\reel$, and let $\oo{I}$ be its interior. We consider two continuous functions $f$ and $g$ defined on $I$ and differentiable on $\oo I$, such that for every $x\in\,\oo{I}$ we have $g^\prime(x)\ne 0$. If  $ f^\prime/g^\prime$ is  increasing ({\it resp.}  decreasing) on $\oo{I}$, then for every
$c\in I$ the function :\par
\centerline{$\ds x\mapsto \frac{f(x)-f(c)}{g(x)-g(c)}$}\par
\noindent is increasing ({\it resp.}  decreasing) on $I\setminus\{c\}$.
\end{lemma}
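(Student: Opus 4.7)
The plan is to reduce the assertion to a direct sign analysis of the derivative of $Q(x):=(f(x)-f(c))/(g(x)-g(c))$ on $\oo{I}\setminus\{c\}$, using the Cauchy Mean Value Theorem as the only nontrivial tool, and then to bridge the two open subintervals across the point $c$.

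First I would observe that, since $g'$ is a nonvanishing derivative on $\oo{I}$, Darboux's intermediate value property for derivatives forces $g'$ to have constant sign, so that $g$ is strictly monotone on $I$. The transformations $g\mapsto -g$ and $f\mapsto -f$ each simultaneously flip the monotonicity type of $f'/g'$ and of $Q$, so without loss of generality I may assume that $g$ is strictly increasing on $I$ and that $f'/g'$ is strictly increasing on $\oo{I}$, and establish that $Q$ is strictly increasing on $I\setminus\{c\}$; the decreasing case then follows by symmetry.

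For $x\in\oo{I}\setminus\{c\}$, a direct differentiation yields
$$Q'(x)=\frac{g'(x)}{g(x)-g(c)}\left(\frac{f'(x)}{g'(x)}-\frac{f(x)-f(c)}{g(x)-g(c)}\right).$$
Applying the Cauchy Mean Value Theorem between $c$ and $x$, I obtain a point $\xi$ strictly between $c$ and $x$ with $(f(x)-f(c))/(g(x)-g(c))=f'(\xi)/g'(\xi)$. A short case split on the sign of $x-c$, which governs both the sign of the prefactor $g'(x)/(g(x)-g(c))$ and the position of $\xi$ relative to $x$, combined with the strict monotonicity of $f'/g'$, shows $Q'(x)>0$ in every case. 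Hence $Q$ is strictly increasing on each of the two open subintervals of $\oo{I}\setminus\{c\}$.

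The step that still requires care, and which I would flag as the main obstacle, is that $Q$ must be increasing on the \emph{union} $I\setminus\{c\}$, not merely on each piece: $Q'>0$ on each component does not by itself imply increasingness across $c$. To bridge $c$, I would take any $x_1<c<x_2$ in $I\setminus\{c\}$ and apply the Cauchy Mean Value Theorem twice, producing $\xi_1\in(x_1,c)$ and $\xi_2\in(c,x_2)$ with $Q(x_i)=f'(\xi_i)/g'(\xi_i)$; since $\xi_1<c<\xi_2$ and $f'/g'$ is strictly increasing, $Q(x_1)<Q(x_2)$. If $c$ or $x$ coincides with an endpoint of $I$, the conclusion extends by continuity of $Q$, which holds because $f$ and $g$ are continuous on $I$ and $g$ is strictly monotone (so the denominator $g(x)-g(c)$ vanishes only at $x=c$).
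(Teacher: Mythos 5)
Your proof is correct, but it takes a genuinely different route from the paper's. You differentiate the quotient $Q$ directly and control the sign of $Q'$ by inserting a Cauchy mean value point $\xi$ strictly between $c$ and $x$, then patch the two components of $I\setminus\{c\}$ together with a second application of the Cauchy Mean Value Theorem; this is essentially the classical proof of the rule, the very one the paper announces it is deliberately avoiding. The paper instead makes the change of variables $t=g(x)$: after the same Darboux-plus-sign-flip normalization that you perform, it observes that $(f\circ g^{-1})'=(f'/g')\circ g^{-1}$ is increasing, hence $f\circ g^{-1}$ is convex on $J=g(I)$, and then invokes the slope characterization of convexity (difference quotients based at a fixed point of $J$ are increasing on all of $J\setminus\{\gamma\}$) before undoing the substitution. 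The convexity route buys a statement that is global across $c$ from the outset, so the two-sided bridging step --- which you rightly flag as the delicate point and handle correctly --- never arises, and no mean value theorem is needed at all. Your route is more computational but entirely elementary, and your extension to endpoints of $I$ by continuity is sound since $g$ is strictly monotone and $g(x)-g(c)$ vanishes only at $x=c$. Both arguments are complete.
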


\begin{proof} 
Since a derivative has the Darboux property, we know that $g^\prime(\oo I)$ is an interval, which does not contain $0$ by assumption. So, $g^\prime$ has a constant sign on $\oo I$. Replacing $(f,g)$ by $(-f,-g)$ if necessary, we may assume that $\forall x\in\oo I,~ g^\prime(x)>0$. Similarly, replacing $(f,g)$ by $(-f,g)$ if necessary, we may assume also that  $f^\prime/g^\prime$ is  increasing on $\oo I$. Hence, without loss of generality, it is sufficient to prove the case where  $g^\prime$ is positive and $ f^\prime/g^\prime$ is  increasing.

 Let $J=g(I)$. Since $g$ is continuous and  increasing, it defines a homeomorphism $g:I\longrightarrow J$. Moreover,
since $\forall\, x\in \oo I, g^\prime (x)>0$ we conclude that $g^{-1}$ has a derivative on $\oo J$ with $(g^{-1})^\prime=1/g^\prime\circ g^{-1}$.

Noting that the  continuous function $f\circ g^{-1}$ is derivable on $\oo J$ with
$$\forall\,x\in \oo J,\qquad(f\circ g^{-1})^\prime(x)=\frac{f^\prime(g^{-1}(x))}{g^\prime(g^{-1}(x))}
=\left(\frac{f^\prime}{g^\prime}\right)\circ g^{-1}(x)$$ 
we conclude that $(f\circ g^{-1})^\prime$ is  increasing on $\oo J$. This proves that $f\circ g^{-1}$ is convex on $J$, and this is equivalent to the fact that, for every $\gamma$ in $J$, the function 
$$t\mapsto \frac{f(g^{-1}(t))-f(g^{-1}(\gamma))}{t-\gamma}$$
is  increasing on $J\setminus\{\gamma\}$. Making the increasing change of variable $t=g(x)$ and $\gamma=g(c)$ we conclude that, for
every $c$ in $I$, the function $$x\mapsto \frac{f(x)-f(c)}{g(x)-g(c)}$$ is  increasing on $I\setminus\{c\}$, which is
the desired conclusion.
\end{proof}

In the next Lemma~\ref{lm22} we will introduce and study some properties of a family of functions.

\begin{lemma}\label{lm22}
Given three real numbers $\alpha,\beta,\gamma$ such that $\abs{\alpha}$, $\abs{\beta}$ and $\abs{\gamma}$ are distinct, we consider the function $L_{\alpha,\beta,\gamma}$ defined on $J=(0,+\infty)$ by
$$L_{\alpha,\beta,\gamma}(x)=\frac{\cosh(\alpha x)-\cosh(\gamma x)}{\cosh(\beta x)-\cosh(\gamma x)}.$$
The functions $L_{\alpha,\beta,\gamma}$ satisfy the following properties :
\begin{enumerate}[\upshape(a)]
\item  $L_{\alpha,\beta,\gamma}=L_{\abs{\alpha},\abs{\beta},\abs{\gamma}}$.
\item  $L_{\alpha,\beta,\gamma}(J)$ is contained in exactly one of the intervals $(-\infty,0)$, $(0,1)$, or $(1,+\infty)$.
\item   $\ds L_{\alpha,\beta,\gamma}=\frac{1}{L_{\beta,\alpha,\gamma}}$,
$L_{\alpha,\beta,\gamma}=1-L_{\alpha,\gamma,\beta}$, and
$L_{\alpha,\beta,\gamma}=\ds\frac{L_{\gamma,\beta,\alpha}}{L_{\gamma,\beta,\alpha}-1}$.
\item   If  $0\leq \gamma<\beta<\alpha$ then $L_{\alpha,\beta,\gamma}$ is  increasing on $J$.
\end{enumerate}
\end{lemma}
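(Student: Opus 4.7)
\textbf{Parts (a)--(c)} are essentially formal. Part (a) follows at once from the evenness of $\cosh$. For part (b), $L_{\alpha,\beta,\gamma}$ is continuous on the connected set $J$; the hypothesis that $\abs{\alpha},\abs{\beta},\abs{\gamma}$ are pairwise distinct prevents each of the three equalities $\cosh(\alpha x)=\cosh(\gamma x)$, $\cosh(\alpha x)=\cosh(\beta x)$, $\cosh(\beta x)=\cosh(\gamma x)$ from holding at any $x>0$, so the denominator never vanishes and $L_{\alpha,\beta,\gamma}$ avoids both $0$ and $1$; by connectedness, its image lies in exactly one component of $\reel\setminus\{0,1\}$. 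For (c), I would set $A=\cosh(\alpha x)$, $B=\cosh(\beta x)$, $C=\cosh(\gamma x)$ and verify each identity by one line of algebra, e.g.\ $1-L_{\alpha,\gamma,\beta}=1-\frac{A-B}{C-B}=\frac{A-C}{B-C}=L_{\alpha,\beta,\gamma}$.

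\textbf{Part (d)} is where the real work lies. Since the functions $A(x):=\cosh(\alpha x)-\cosh(\gamma x)$ and $B(x):=\cosh(\beta x)-\cosh(\gamma x)$ are even and vanish at $0$, the plan is to expand them in powers of $x^{2}$:
\[
L_{\alpha,\beta,\gamma}(x)=\frac{A(x)}{B(x)}=\frac{\sum_{n\geq 1} a_n x^{2n}}{\sum_{n\geq 1} b_n x^{2n}},\qquad a_n=\frac{\alpha^{2n}-\gamma^{2n}}{(2n)!},\quad b_n=\frac{\beta^{2n}-\gamma^{2n}}{(2n)!},
\]
both coefficient sequences being positive under the hypothesis $0\leq\gamma<\beta<\alpha$. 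A short calculation of $A'(x)B(x)-A(x)B'(x)$ via Cauchy product and antisymmetrization of the pair $(n,m)$ produces
\[
A'(x)B(x)-A(x)B'(x)=\sum_{n>m\geq 1} 2(n-m)\, b_n b_m\!\left(\frac{a_n}{b_n}-\frac{a_m}{b_m}\right) x^{2(n+m)-1},
\]
so the whole claim reduces to proving that the sequence $a_n/b_n$ is strictly increasing in $n$.

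\textbf{The main obstacle} is precisely this monotonicity. Writing $R=\alpha^{2}$, $S=\beta^{2}$, $T=\gamma^{2}$ with $R>S>T\geq 0$, the ratio is $(R^n-T^n)/(S^n-T^n)$. The case $T=0$ collapses to $(R/S)^n$, which is plainly increasing. For $T>0$, I would factor $T^n$ out of numerator and denominator to replace $R,S$ by $\bar R=R/T>\bar S=S/T>1$, then use $\bar R^n-1=(\bar R-1)\sum_{k=0}^{n-1}\bar R^k$ to reduce the question to strict monotonicity of $\rho_n=\sum_{k=0}^{n-1}\bar R^k\big/\sum_{k=0}^{n-1}\bar S^k$. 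Cross-multiplying $\rho_{n+1}>\rho_n$ and telescoping gives
\[
\bar R^n\sum_{k=0}^{n-1}\bar S^k-\bar S^n\sum_{k=0}^{n-1}\bar R^k=\sum_{k=0}^{n-1}(\bar R\bar S)^k\bigl(\bar R^{n-k}-\bar S^{n-k}\bigr),
\]
every summand of which is positive, settling the monotonicity and completing the argument. An alternative would be to invoke the L'Hospital Monotone Rule twice (both $A,B$ and $A',B'$ vanish at $0$), reducing matters to showing that $A''/B''$ is increasing; but this last step still ends at the same coefficient-ratio inequality, so nothing would be gained by that detour.
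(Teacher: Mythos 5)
Your proof is correct, but part (d) takes a genuinely different route from the paper's. The paper factors each difference of hyperbolic cosines via $\cosh A-\cosh B=2\sinh\frac{A+B}{2}\sinh\frac{A-B}{2}$, writes
$$L_{\alpha,\beta,\gamma}(x)=\frac{\sinh\left(\tfrac{\alpha+\gamma}{2}x\right)}{\sinh\left(\tfrac{\beta+\gamma}{2}x\right)}\times\frac{\sinh\left(\tfrac{\alpha-\gamma}{2}x\right)}{\sinh\left(\tfrac{\beta-\gamma}{2}x\right)},$$
and shows each factor is a positive increasing function by applying Lemma~\ref{lm21} to $\sinh(ax)/\sinh(bx)$ for $a>b>0$, the derivative ratio $a\cosh(ax)/(b\cosh(bx))$ being increasing thanks to the addition formula $\cosh(ax)/\cosh(bx)=\cosh((a-b)x)+\sinh((a-b)x)\tanh(bx)$. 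You instead expand numerator and denominator as even power series with positive coefficients and reduce strict monotonicity of the quotient to strict monotonicity of the coefficient ratio $a_n/b_n=(R^n-T^n)/(S^n-T^n)$, which you settle by a geometric-sum and telescoping identity; this is in effect the Biernacki--Krzy\.{z} criterion for ratios of power series, proved from scratch. I verified the antisymmetrized Cauchy-product formula for $A'B-AB'$ and the identity $\bar R^n\sum_{k=0}^{n-1}\bar S^k-\bar S^n\sum_{k=0}^{n-1}\bar R^k=\sum_{k=0}^{n-1}(\bar R\bar S)^k(\bar R^{n-k}-\bar S^{n-k})$; both are correct, and since all series are entire the rearrangements are legitimate (you should say a word about that, and note that $B>0$ on $J$ so the sign of $L'$ is that of $A'B-AB'$). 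What the paper's route buys is brevity and reuse of the already-proved monotone rule; what yours buys is self-containedness (part (d) no longer depends on Lemma~\ref{lm21}) and a statement that generalizes at once to any two power series with positive coefficients whose coefficient ratio is monotone. Both arguments are complete.
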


\begin{proof}
(a) is obvious. (b) follows from the continuity of $L_{\alpha,\beta,\gamma}$ and the fact that
$L_{\alpha,\beta,\gamma}(x)\notin\{0,1\}$ for every $x>0$.  (c) is a simple verification. Finally, to see (d) 
we note that, if $a>b>0$ then, from the fact that
$$\frac{\cosh(a x)}{\cosh (b x)}=\cosh( (a-b) x)+\sinh( (a-b) x) \tanh(bx)$$
we conclude that $x\mapsto \frac{a\cosh(a x)}{b\cosh(b x)}$ is  increasing on $(0,+\infty)$, and using Lemma~\ref{lm21}. we find that
$x\mapsto \frac{\sinh(a x)}{\sinh(b x)}$ is  increasing on $(0,+\infty)$. 

Now, noting that
$$L_{\alpha,\beta,\gamma}(x)=\frac{\sinh\left(\frac{\alpha+\gamma}{2}x\right)}{\sinh\left(\frac{\beta+\gamma}{2}x\right)}\times
\frac{\sinh\left(\frac{\alpha-\gamma}{2}x\right)}{\sinh\left(\frac{\beta-\gamma}{2}x\right)}$$
we conclude that, if $0\leq\gamma<\beta<\alpha$ then   $L_{\alpha,\beta,\gamma}$ is  increasing on $(0,+\infty)$, as the product of two positive  increasing functions. This ends the proof of Lemma~\ref{lm22}.
\end{proof}

 We are interested in the properties of monotony of these functions. The following result is a simple consequence
of Lemma~\ref{lm22}. 

\begin{corollary}\label{cor23}
For distinct $\abs{\alpha}$, $\abs{\beta}$ and $\abs{\gamma}$, the function
 $\Delta(\alpha,\beta,\gamma)L_{\alpha,\beta,\gamma}$ where
$$\Delta(\alpha,\beta,\gamma)=\hbox{\rm sgn}\,\big((\alpha^2-\beta^2)(\alpha^2-\gamma^2)(\beta^2-\gamma^2)\big)$$
is  increasing on $(0,+\infty)$.
\end{corollary}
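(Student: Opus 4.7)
By Lemma~\ref{lm22}(a) we may assume $\alpha,\beta,\gamma\geq 0$, in which case they are pairwise distinct; let $m>M>s\geq 0$ denote their decreasing rearrangement. Property (d) of Lemma~\ref{lm22} says that $\varphi:=L_{m,M,s}$ is increasing on $J$, and the expansion $\cosh(tx)=1+\frac{1}{2}t^2x^2+o(x^2)$ as $x\to 0^+$ gives $\varphi(0^+)=(m^2-s^2)/(M^2-s^2)>1$, so property (b) forces $\varphi(J)\subset(1,+\infty)$.

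Next, the polynomial $(\alpha^2-\beta^2)(\alpha^2-\gamma^2)(\beta^2-\gamma^2)$ flips sign under any transposition of its arguments, so $\Delta(\alpha,\beta,\gamma)=\hbox{\rm sgn}(\sigma)$, where $\sigma\in S_3$ is the unique permutation sending $(m,M,s)$ to $(\alpha,\beta,\gamma)$. It therefore suffices to verify, one permutation at a time, that $\hbox{\rm sgn}(\sigma)\,L_{\alpha,\beta,\gamma}$ is increasing on $J$.

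The three identities in (c) correspond precisely to the three transpositions of $S_3$, so by iterating them each $L_{\alpha,\beta,\gamma}$ can be expressed as an explicit rational function of $\varphi$: the identity permutation yields $\varphi$; the three transpositions yield $1/\varphi$, $1-\varphi$, and $\varphi/(\varphi-1)$; the two $3$-cycles yield $1-1/\varphi$ and $1/(1-\varphi)$. Using that $\varphi>1$ and is increasing on $J$, an elementary derivative computation shows that the three transposition expressions are strictly decreasing while $\varphi$ itself and the two $3$-cycle expressions are strictly increasing. In every case the monotonicity matches $\hbox{\rm sgn}(\sigma)$, so $\Delta(\alpha,\beta,\gamma)\,L_{\alpha,\beta,\gamma}$ is increasing, as required.

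The substantive analytic work is already contained in Lemma~\ref{lm22}; the only real obstacle here is the bookkeeping over the six orderings, which is tamed by the $S_3$-equivariance of the three identities in (c) and by the signature interpretation of $\Delta$.
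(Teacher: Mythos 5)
Your proof is correct and follows essentially the same route as the paper: reduce to the ordered case via Lemma~\ref{lm22}(a),(d), identify $\Delta$ with the signature of the permutation, and propagate monotonicity through the identities of Lemma~\ref{lm22}(c). The only difference is organizational --- the paper argues inductively over transpositions (each one flips both $\Delta$ and the monotonicity, since it composes $L$ with a decreasing map), whereas you enumerate all six permutations explicitly; your extra step pinning down $\varphi(J)\subset(1,+\infty)$ is a nice touch of care that the paper leaves implicit in its appeal to property (b).
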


\begin{proof}
 By Lemma~\ref{lm22},  if $0\leq\abs{\gamma}<\abs{\beta}<\abs{\alpha}$ then $\Delta(\alpha,\beta,\gamma)=+1$ and $L_{\alpha,\beta,\gamma}$ is  increasing so the conclusion is true in this case.

\smallskip
 Now, if $\Delta(\alpha,\beta,\gamma)L_{\alpha,\beta,\gamma}$ is  increasing, and 
 two of the numbers $\alpha,\beta,\gamma$ are transposed  the quantity $\Delta(\alpha,\beta,\gamma)$ changes its sign, and by Lemma~\ref{lm22}, the function $L_{\alpha,\beta,\gamma}$ is composed with a decreasing function (namely $x\mapsto 1/x$, $x\mapsto1-x$ or $x\mapsto x/(x-1)$). Hence, the product remains  increasing. From
 this, the result follows since these transpositions generate all permutations of $(\alpha,\beta,\gamma)$.
\end{proof}

The following Proposition and its Corollary are the main technical results used in the proof of Theorem~\ref{th31}.

\begin{proposition}\label{pro24}
 Let $r$ and $q$ be two real numbers such that $r\notin\{0,1\}$ and $q\ne 0$, and 
let $G_{r,q}$ be the function defined on $J=(0,+\infty)$ by 
$$G_{r,q}(x)=\frac{\big(\cosh(r x)\big)^{q/r}\tanh(rx)}{\big(\cosh( x)\big)^{q}\tanh(x)}.$$ 
Then, the following conclusion holds :
\begin{enumerate}[\upshape(a)]
\item $G_{r,q}$ is decreasing if and only if $(r,q)$ belongs to one of the following sets :

$
\begin{matrix}
{\scriptscriptstyle \square}~&  \left\{(u,v):u<0,v\leq\min(0,\frac23(u+1))\right\}\setminus\{(-1,0)\},\hfill\\
{\scriptscriptstyle \square}~& \left\{(u,v):0<u<1,v\geq\max(2u,\frac23(u+1))\right\}\setminus\{(\frac12,1)\},\hfill\\
{\scriptscriptstyle \square}~& \left\{(u,v):1<u,v\leq\min(2,\frac23(u+1))\right\}\setminus\{(2,2)\}.\hfill
\end{matrix}$
\item  $G_{r,q}$ is increasing if and only if $(r,q)$ belongs to one of the following sets :

$
\begin{matrix}{\scriptscriptstyle \square}~& \left\{(u,v):u<0,v\geq \max(0,\frac23(u+1))\right\}
\setminus\{(-1,0)\},\hfill\\
{\scriptscriptstyle \square}~& \left\{(u,v):0<u<1, v\leq\min(2u,\frac23(u+1))\right\}\setminus\{(\frac12,1)\},\hfill\\
{\scriptscriptstyle \square}~&\left\{(u,v):1<u,v\geq\max(2,\frac23(u+1))\right\}\setminus\{(2,2)\}.\hfill
\end{matrix}
$
\end{enumerate}
\end{proposition}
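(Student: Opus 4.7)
The plan is to differentiate $\log G_{r,q}$ and reduce the monotonicity of $G_{r,q}$ to the constant-sign problem for an explicit combination of hyperbolic sines. A short computation gives
\[
\frac{G_{r,q}'(x)}{G_{r,q}(x)} = q\bigl(\tanh(rx)-\tanh x\bigr) + \frac{2r}{\sinh(2rx)} - \frac{2}{\sinh(2x)};
\]
multiplying through by $\sinh(2x)\sinh(2rx)$ and using that $G_{r,q}(x)$ and $\sinh(2rx)$ share the same sign on $(0,\infty)$ in every regime (both positive when $r>0$, both negative when $r<0$), one obtains, with $u=2x$, the identification $\mathrm{sgn}\,G_{r,q}'(x) = \mathrm{sgn}\,N(u)$, where
\[
N(u) = q\,Q(u) - 2\,P(u),\qquad P(u) := \sinh(ru) - r\sinh u,\quad Q(u) := \sinh(ru) - \sinh u - \sinh((r-1)u).
\]

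Two observations unlock the argument. Product-to-sum identities give the factorisation
\[
Q(u) = 4\sinh\tfrac{u}{2}\,\sinh\tfrac{ru}{2}\,\sinh\tfrac{(r-1)u}{2},
\]
so $Q$ has constant sign on $(0,\infty)$; and the fact that $a\mapsto \sinh(au)/a$ is strictly increasing on $(0,\infty)$ (proved inside Lemma~\ref{lm22}(d)) implies that $P$ also has constant sign on $(0,\infty)$. Hence $N$ has constant sign on $(0,\infty)$ if and only if $q$ lies outside the image of $\tilde\phi(u):=2P(u)/Q(u)$. Writing $P_a(u):=\sinh(au)-a\sinh u$, one has $Q = P_r - P_{r-1}$, whence
\[
\tilde\phi(u) = \frac{2}{1-R(u)},\qquad R(u) := \frac{P_{r-1}(u)}{P_r(u)}.
\]
Since $P_r(0)=P_{r-1}(0)=0$, Lemma~\ref{lm21} reduces the monotonicity of $R$ to that of
\[
\frac{P_{r-1}'(u)}{P_r'(u)} \;=\; \frac{r-1}{r}\,L_{r-1,r,1}(u),
\]
and Corollary~\ref{cor23} then gives the strict monotonicity of $L_{r-1,r,1}$ on $(0,\infty)$ with direction encoded by $\Delta(r-1,r,1)$; this handles every case except when two of $|r-1|,\,|r|,\,1$ coincide, i.e.\ $r\in\{0,1\}\cup\{-1,\tfrac12,2\}$.

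The endpoint values of $\tilde\phi$ are then found by Taylor expansion at $0$ and exponential matching at $\infty$: the former gives $\tilde\phi(0^+)=\tfrac{2(r+1)}{3}$ (equivalently, $N(u)=\tfrac{u^3}{6}\,r(r-1)(3q-2(r+1))+O(u^5)$), and the latter yields
\[
\tilde\phi(+\infty) = \begin{cases} 2, & r>1,\\ 2r, & 0<r<1,\\ 0, & r<0. \end{cases}
\]
These four values produce the four boundary lines $v=\tfrac{2}{3}(u+1),\ v=2,\ v=2u,\ v=0$ of the statement. The three points $(-1,0),\,(\tfrac12,1),\,(2,2)$ are exactly those $(r,q)$ for which $\tilde\phi(0^+)=\tilde\phi(+\infty)$; at the corresponding $r\in\{-1,\tfrac12,2\}$ a direct simplification using $\sinh 3u = 3\sinh u + 4\sinh^3 u$ and its bisection/negation analogues shows that $\tilde\phi\equiv 0,\,1,\,2$ respectively, so when $q$ equals that common value one has $N\equiv 0$ and $G_{r,q}$ is constant; this explains why those three points must be removed from both monotonicity sets.

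Since $\tilde\phi$ is continuous and strictly monotone in every non-degenerate case, its image is the open interval between its two limit values, and $q$ lies in the complement iff $q\le\min\{\tilde\phi(0^+),\tilde\phi(+\infty)\}$ or $q\ge\max\{\tilde\phi(0^+),\tilde\phi(+\infty)\}$; combined with the direction of monotonicity of $\tilde\phi$ (determined by the signs of $(r-1)/r$ and of $\Delta(r-1,r,1)$) this unpacks into exactly the six piecewise conditions of (a) and (b). The main obstacle is bookkeeping: tracking in each sub-regime of $r$ which endpoint is the minimum of $\tilde\phi$ and which is the maximum, and verifying that the three exceptional pairs enumerated above are the only degeneracies.
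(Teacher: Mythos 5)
Your argument is correct and is essentially the paper's own proof in different notation: your $N=qQ-2P$, $Q$, and $\tilde\phi=2P/Q$ coincide exactly with the paper's $H_{r,q}$, $A_r=K_r\widetilde{K}_r$, and $B_r=2/\widetilde{K}_r$, and both proofs then apply Lemma~\ref{lm21} and Corollary~\ref{cor23} to $L_{r-1,r,1}$ and compute the same two limiting values $\tfrac{2}{3}(r+1)$ and $2$, $2r$, $0$. The only (pleasant) cosmetic difference is your closed-form factorization $Q(u)=4\sinh\tfrac{u}{2}\sinh\tfrac{ru}{2}\sinh\tfrac{(r-1)u}{2}$, which yields the sign of $A_r$ directly rather than via the entries of Table~\ref{tab2}.
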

\begin{figure}[!h]
\begin{center}
\includegraphics[width=0.55\textwidth]{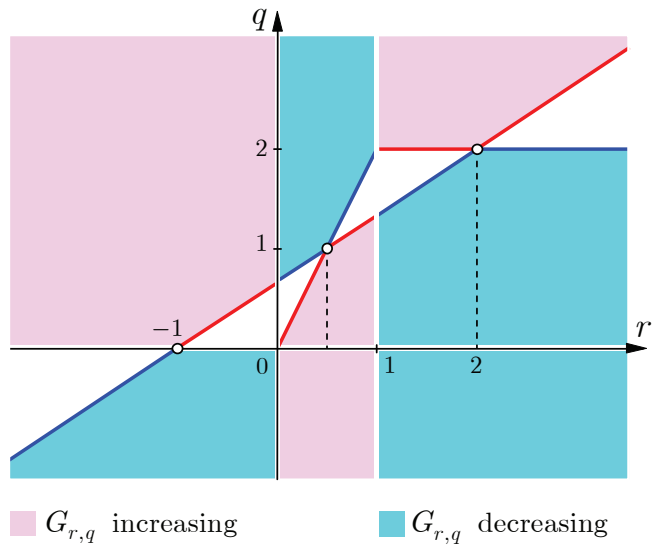}
\caption{The domains where $G_{r,q}$ is strictly monotonous on $(0,+\infty)$ .}\label{fig1}
\end{center}
\end{figure}
\begin{proof} 
First, note that
\begin{align*}
G_{r,q}^\prime(x)&=\frac{(\cosh(rx))^{-2+q/r}}{(\cosh x)^{q}\sinh^2 x}\big(
q\sinh^2(rx)\sinh x\cosh x-q\cosh (rx)\sinh(rx)\sinh^2x\\
&\qquad\qquad+ r\sinh x\cosh x-\cosh(rx)\sinh(rx)\big),\\
&=\frac{(\cosh(rx))^{-2+q/r}}{4(\cosh x)^{q}\sinh^2 x}\big(q(\cosh(2rx)-1)\sinh(2 x)
-q\sinh(2rx)(\cosh (2x)-1)\\
&\qquad\qquad +2r\sinh(2x)-2\sinh(2rx)\big),\\
&=\frac{(\cosh(rx))^{-2+q/r}}{4(\cosh x)^{q}\sinh^2 x}
\big((q-2)(\sinh(2rx)-r\sinh(2 x)) \\
&\qquad\qquad\qquad\qquad-q(\sinh(2(r-1) x)-(r-1)\sinh(2 x))\big).
\end{align*}

So, if we define  $K_\ell$ by $K_\ell(x)=\sinh(2\ell x)-\ell \sinh(2x)$, we find that
$$
G_{r,q}^\prime(x)=\frac{(\cosh(rx))^{-2+q/r}}{4(\cosh x)^{q}\sinh^2 x}H_{r,q}(x),
$$
where $H_{r,q}(x)=(q-2)K_r(x) -qK_{r-1}(x)$. So, 
the sign of $G_{r,q}^\prime(x)$ is the same as that of $H_{r,q}(x)$.

We want to find the necessary and sufficient conditions, on $r$ and $q$,  for $H_{r,q}$ to keep a constant sign on  $J$.

First, let us eliminate some simple cases :
\begin{itemize}
\item For $x\in J$, we have $H_{-1,q}(x)=qK_{2}(x)=2q\sinh(2x)(\cosh(2x)-1)>0$, and
$G_{-1,q}$ is increasing on $J$.
\item For $x\in J$, we have 
$H_{1/2,q}(x)=2(q-1)K_{1/2}(x) =2(1-q)\sinh( x)(\cosh(x)-1)$. So, $G_{1/2,q}$ is increasing on $J$ if $q<1$ and 
it is decreasing on $J$ if $q>1$.
\item  For $x\in J$, we have 
$H_{2,q}(x)=(q-2)K_2(x) =2(q-2)\sinh(2x)(\cosh(2x)-1)$. So, $G_{2,q}$ is increasing on $J$ if $q>2$ and 
it is decreasing on $J$ if $q<2$.
\end{itemize}
In what follows we will suppose that $r\notin\{-1,0,\frac12,1,2\}$.

Since $K_r^\prime(x)=2r(\cosh(2rx)-\cosh(2x))$ has the
sign of $r(r^2-1)$ for every $x>0$, and $K_r(0)=0$,  we conclude that $K_r$
has the sign of $r(r^2-1)$ on $J$, In particular, it does not vanish on this interval. So, let us write
$$H_{r,q}=K_r\times\left(q\left(1-\frac{K_{r-1}}{K_r}\right)-2\right).$$

Clearly, the range of the function $\frac{K_{r-1}}{K_r}$ plays an important role in our study. Noting that $\frac{K_{r-1}^\prime(x)}{K_r^\prime(x)}=\frac{r-1}{r}L_{r-1,r,1}(2x)$,  (where $L_{\alpha,\beta,\gamma}$ is the function defined in Lemma~\ref{lm22}), and remembering that 
$$\Delta(r-1,r,1) =-\hbox{sgn}\big((2r-1)r(r-2)(r^2-1)\big)$$
we conclude, using Corollary~\ref{cor23}, that 
$(2r-1)(r-2)(r+1)\frac{K_{r-1}^\prime}{K_r^\prime}$ is decreasing. Now, Lemma~\ref{lm21}, proves that
$(2r-1)(r-2)(r+1) \frac{K_{r-1}}{K_r}$ is also decreasing,  or, equivalently,
 that $(2r-1)(r-2)(r+1) \big(1-\frac{K_{r-1}}{K_r})$ is increasing. So, let $\widetilde{K}_r=1-K_{r-1}/K_r$, to find its range, we only need to determine the limits of this function at $0^+$ and at $+\infty$. The results are shown in Table~\ref{tab1}.

\begin{table}[h]
\begin{center}
\renewcommand{\arraystretch}{1.25}
\begin{tabular}{|c|c|c|c|}\hline
$r$& $\ds\lim_{0^+}\widetilde{K}_r$&$\ds\widetilde{K}_r$&
$\ds\lim_{\infty}\widetilde{K}_r$\\
\hline\hline
$2<r$             & $\frac{3}{r+1}$ & $\ds\nearrow$ &1  \\
\hline
$1<r<2$         &$\frac{3}{r+1}$  & $\ds\searrow$ &1  \\
\hline
$\frac12<r<1$ &$\frac{3}{r+1}$  & $\ds\searrow$&$\frac{1}{r}$\\ \hline
$0<r<\frac12$  &$\frac{3}{r+1}$ & $\ds\nearrow$ &$\frac{1}{r}$\\ \hline
$-1<r<0$         &$\frac{3}{r+1}$ & $\ds\nearrow$ &$+\infty$ \\
\hline
$r<-1$             &$\frac{3}{r+1}$ & $\ds\searrow$ &$-\infty$ \\
\hline
\end{tabular}
\belowcaptionskip=0pt
\caption{The monotony of $\widetilde{K}_r$, and its limits at $0^+$ and $+\infty$ according to the values of $r$.}
\label{tab1}
\end{center}
\end{table}

In particular, $\widetilde{K}_r$ does not vinish on $J$, it is positive if $r>-1$, and it is negative if $r<-1$. Now, If
the functions $K_r\widetilde{K}_r$ and $2/\widetilde{K}_r$ are denoted by $A_r$ and $B_r$ respectively, then,
 we have $H_{r,q}=A_r\times (q-B_r)$, and using the information in Table~\ref{tab1},
we can determine exactly, under what conditions $H_{r,q}$ keeps a constant sign on $J$. This is summarized in Table~\ref{tab2}, where 
we put, according to the value of $r$, the necessary and sufficient conditions that ensure the validity
of the inequalities  $H_{r,q}>0$ or $H_{r,q}>0$ on $J$. 

\begin{table}[h]
\begin{center}
\renewcommand{\arraystretch}{1.25}
\begin{tabular}{|c|c|c|l|l|}\hline
$r$        &    $\hbox{sgn}(A_r)$  &  $\hbox{range}(B_r)$   &\quad $H_{r,q}>0$  &\quad $H_{r,q}<0$\\
\hline\hline
$2<r$           &    $+1$  &  $\left(2,\frac23(r+1)\right)$  & $q\geq \frac23(r+1)$ &     $q\leq2$\\
\hline
$1<r<2$        &   $+1$  &  $\left(\frac23(r+1),2\right)$  & $q\geq 2$ &     $q\leq\frac23(r+1)$\\
\hline
$\frac12<r<1$&   $-1$    &  $\left(\frac23(r+1),2r\right)$ & $q\leq\frac23(r+1)$ &     $q\geq2r$\\
\hline
$0<r<\frac12$&   $-1$    &  $\left(2r,\frac23(r+1)\right)$ & $q\leq2r$ &     $q\geq \frac23(r+1)$\\
\hline
$-1<r<0$&   $+1$    &  $\left(0,\frac23(r+1)\right)$ & $q\geq \frac23(r+1)$ &     $q<0$\\
\hline
$r<-1$&   $+1$    &  $\left(\frac23(r+1),0\right) $ & $q>0$ &     $q\leq \frac23(r+1)$\\
\hline
\end{tabular}
\belowcaptionskip=0pt
\caption{The necessary and sufficient conditions on $r$ and $q$ that ensure the validity
of the inequalities  $H_{r,q}>0$ or $H_{r,q}>0$ on $(0,\infty)$.}
\label{tab2}
\end{center}
\end{table}
\noindent From the last two columns of Table~\ref{tab2}, the conclusion of Proposition~\ref{pro24} follows.
\end{proof}

\begin{corollary}\label{cor25}
Let $r$ and $q$ be two real numbers such that $r\notin\{0,1\}$ and $q\ne 0$, and 
let $F_{r,q}$ be the function defined on $J=(0,+\infty)$ by 
$$F_{r,q}(x)=\frac{\big(\cosh(r x)\big)^{q/r}-1}{\big(\cosh( x)\big)^{q}-1}.$$ 
Then, the following conclusion holds :
\begin{enumerate}[\upshape(a)]
\item  $F_{r,q}$ is decreasing if $(r,q)$ belongs to one of the following sets :

$\begin{matrix}
{\scriptscriptstyle \square}~&  \left\{(u,v):u<0,v\leq\min(0,\frac23(u+1))\right\}\setminus\{(-1,0)\},\hfill\\
{\scriptscriptstyle \square}~& \left\{(u,v):0<u<1,v\geq\max(2u,\frac23(u+1))\right\}\setminus\{(\frac12,1)\},\hfill\\
{\scriptscriptstyle \square}~& \left\{(u,v):1<u,v\leq\min(2,\frac23(u+1))\right\}\setminus\{(2,2)\}.\hfill
\end{matrix}$

\item  $F_{r,q}$ is increasing if $(r,q)$ belongs to one of the following sets :

$\begin{matrix}
{\scriptscriptstyle \square}~& \left\{(u,v):u<0,v\geq \max(0,\frac23(u+1))\right\}
\setminus\{(-1,0)\},\hfill\\
{\scriptscriptstyle \square}~& \left\{(u,v):0<u<1, v\leq\min(2u,\frac23(u+1))\right\}\setminus\{(\frac12,1)\},\hfill\\
{\scriptscriptstyle \square}~&\left\{(u,v):1<u,v\geq\max(2,\frac23(u+1))\right\}\setminus\{(2,2)\}.\hfill
\end{matrix}$
\end{enumerate}
\end{corollary}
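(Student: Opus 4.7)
The plan is to deduce Corollary~\ref{cor25} from Proposition~\ref{pro24} in one stroke, via the L'Hospital Monotone Rule (Lemma~\ref{lm21}) applied on $I=[0,+\infty)$ with $c=0$.

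Concretely, I set $f(x)=(\cosh(rx))^{q/r}-1$ and $g(x)=(\cosh x)^q-1$. Both are continuous on $[0,+\infty)$ and differentiable on $\oo I=(0,+\infty)$, and crucially $f(0)=g(0)=0$, so that
$$F_{r,q}(x)=\frac{f(x)-f(0)}{g(x)-g(0)}.$$
A direct differentiation yields $f'(x)=q\,(\cosh(rx))^{q/r}\tanh(rx)$ and $g'(x)=q\,(\cosh x)^q\tanh(x)$, whence
$$\frac{f'(x)}{g'(x)}=\frac{(\cosh(rx))^{q/r}\tanh(rx)}{(\cosh x)^q\tanh(x)}=G_{r,q}(x).$$

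I then verify the hypothesis $g'(x)\neq 0$ on $\oo I$: since $q\neq 0$, $\cosh x>0$, and $\tanh(x)>0$ for every $x>0$, this is immediate. Lemma~\ref{lm21} therefore applies and transfers the monotony of $f'/g'=G_{r,q}$ to the monotony of $F_{r,q}$ on $(0,+\infty)$. Invoking Proposition~\ref{pro24} finishes both (a) and (b): for any pair $(r,q)$ in one of the three sets listed in part~(a) of the corollary, $G_{r,q}$ is decreasing, hence so is $F_{r,q}$; and symmetrically for part~(b).

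There is essentially no obstacle in the argument itself—the work has been done in establishing Proposition~\ref{pro24}. The only thing worth stating carefully is that Corollary~\ref{cor25} is phrased as a one-directional implication (sufficient conditions), so one does not need to worry that the converse direction of Proposition~\ref{pro24} transfers across Lemma~\ref{lm21}; only the ``if'' half is used here.
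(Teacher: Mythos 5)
Your proposal is correct and follows exactly the paper's own route: writing $F_{r,q}$ as a quotient of the functions $f_{r,q}(x)=(\cosh(rx))^{q/r}-1$ vanishing at $0$, observing that the ratio of derivatives is $G_{r,q}$, and transferring the monotonicity from Proposition~\ref{pro24} via Lemma~\ref{lm21}. Your explicit verification that $g'\neq0$ on $(0,+\infty)$ and your remark that only the sufficient direction is needed are small but welcome additions to what the paper leaves implicit.
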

\begin{proof} 
 Given nonzero $q$ and $r$, we define the function $f_{r,q}$ on $[0,+\infty)$ by
$$f_{r,q}(x)=(\cosh(rx))^{q/r}-1.$$
So that, for $x>0$, we have $F_{r,q}(x)=f_{r,q}(x)/f_{1,q}(x)$. Noticing that $f_{r,q}(0)=0$, and that, for $x>0$, we have
$$\frac{f^\prime_{r,q}(x)}{f^\prime_{1,q}(x)}=G_{r,q}(x)$$
where $G_{r,q}$ is the function defined in Proposition~\ref{pro24}, we conclude that the monotonicity of $F_{r,q}$ can be deduced from that of  $G_{r,q}$ using Lemma~\ref{lm21}. This proves Corollary~\ref{cor25}
\end{proof}

The next proposition, is the technical tool used in the proof of our Theorem~\ref{th33}.
\bigskip\goodbreak
\begin{proposition}\label{pro26}
 Let $q$ be a positive real number, and let $R_q(x,t)$ be the function defined on $(0,+\infty)\times(1,+\infty)$ by 
$$R_{q}(x,t)=(t-1)\coth((t-1)x)-(q+1)\tanh(x)+(q-t)\tanh (t x).$$ 
Let ${\mathcal A}_q$ be the subset of $(1,+\infty)$ defined by
$${\mathcal A}_q=\left\{t>1 : \forall\, x>0,\quad\frac{\partial R_q}{\partial t}(x,t)<0\right\}.$$
Then,
$${\mathcal A}_q=\left\{
\begin{matrix}
\left[1+\frac{3}{4}(q-2),+\infty\right)&\quad\hbox{ if }& \hfill q>2,\\
\,\\
\left(1,+\infty\right)&\quad \hbox{ if } &0<q\leq 2.
\end{matrix}\right.$$
\end{proposition}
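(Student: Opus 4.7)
The plan is to compute $\partial R_q/\partial t$ explicitly, exploit its affine dependence on $q$ to reduce to a single boundary case, and then study the sign of the resulting function of $x$. Differentiating term by term,
$$\frac{\partial R_q}{\partial t}(x,t) = \coth\bigl((t-1)x\bigr) - \frac{(t-1)x}{\sinh^2\bigl((t-1)x\bigr)} - \tanh(tx) + \frac{(q-t)x}{\cosh^2(tx)}.$$
This expression is affine in $q$ with positive slope $x/\cosh^2(tx)$, so the predicate ``$\partial R_q/\partial t(x,t) < 0$ on all of $(0,\infty)$'' is monotone in $q$: if it holds at some $q_0$, it holds at every $q < q_0$.

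\textbf{Necessity.} I would Taylor-expand near $x = 0^+$ using $\coth y - y/\sinh^2 y = \tfrac{2y}{3} + O(y^3)$, $\tanh y = y + O(y^3)$, and $1/\cosh^2 y = 1 + O(y^2)$; the $x$-coefficient of $\partial R_q/\partial t$ comes out to $(3q - 4t - 2)/3$, which is strictly positive exactly when $q > 2$ and $t < 1 + \tfrac{3}{4}(q-2)$. For such $(q,t)$ the derivative is positive on a right-neighbourhood of $0$, so $t \notin \mathcal{A}_q$; this yields one inclusion.

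\textbf{Sufficiency.} By monotonicity in $q$ it suffices to handle the extreme value $q^\ast(t) := (4t+2)/3$: since $q^\ast(t) > 2$ for every $t > 1$, a single verification at $q = q^\ast(t)$ simultaneously covers both regimes ($0 < q \le 2$ with all $t > 1$, and $q > 2$ with $t \ge 1 + \tfrac{3}{4}(q-2)$). At $q = q^\ast(t)$ the leading $x$-term vanishes, and the next coefficient in the Taylor expansion works out to $-\bigl(4(t-1)^3 + 30 t^2\bigr)/45 < 0$, so $\partial R_{q^\ast(t)}/\partial t(\cdot,t)$ is strictly negative on a right-neighbourhood of $0$; it also tends to $0$ as $x \to +\infty$. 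The main obstacle is ruling out a zero of this function on the interior $(0,\infty)$. I would attempt it by proving strict monotonicity in $x$, so that the function climbs from negative values near $0$ up to its limit $0$ at infinity without ever crossing $0$. The natural technical tool is a Mittag--Leffler / partial-fraction decomposition, for instance
$$\coth y - \frac{y}{\sinh^2 y} = \sum_{n \ge 1} \frac{4 y\, n^2\pi^2}{(y^2 + n^2\pi^2)^2},$$
together with the analogous expansion of $\tanh$ and its derivative. Substituting $y = (t-1)x$ and $y = tx$ and using the key identity $q^\ast(t) - t = (t+2)/3$, one would hope to arrange the $x$-derivative of $\partial R_{q^\ast(t)}/\partial t$ as a term-wise positive series. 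This term-by-term sign verification is where I expect the real work to lie.
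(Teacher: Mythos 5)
Your setup coincides with the paper's: the explicit formula for $\partial R_q/\partial t$, the observation that it is affine in $q$ with positive slope $x/\cosh^2(tx)$ (the paper writes $\partial R_q/\partial t=\frac{x}{\cosh^2(tx)}\,(q-S(x,t))$ for an explicit $S$), the necessity argument from the coefficient $(3q-4t-2)/3$ of $x$ at $x=0^+$, and the reduction of sufficiency to the single boundary value $q^\ast(t)=(4t+2)/3$ are all exactly the paper's steps, and your coefficient $-(4(t-1)^3+30t^2)/45$ of $x^3$ is correct. The genuine gap is the remaining step, which is the heart of the proposition and which you leave unexecuted; worse, the strategy you sketch for it cannot succeed. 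You propose to show that $\varphi(x):=\frac{\partial R_{q^\ast(t)}}{\partial t}(x,t)$ is negative on $(0,\infty)$ by proving it is strictly increasing, ``climbing from negative values near $0$ up to its limit $0$ at infinity.'' But by your own expansion $\varphi(x)=-\frac{4(t-1)^3+30t^2}{45}x^3+O(x^5)$, so $\varphi\to 0$ as $x\to 0^+$ as well: $\varphi$ vanishes at both ends of $(0,\infty)$ and is (to be shown) negative in between, hence it is decreasing on a right-neighbourhood of $0$ and cannot possibly be monotone on all of $(0,\infty)$. Consequently no termwise-positive Mittag--Leffler rearrangement of $\varphi'$ can exist, and the plan fails at exactly the point where, as you say, the real work lies.

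The paper's way around this is to prove positivity of a cleared-denominator version of $-\varphi$ rather than monotonicity of $\varphi$ itself: it sets $W=6x\sinh^2((t-1)x)\bigl(S(x,t)-\frac{2+4t}{3}\bigr)$, notes that $W$ is a finite combination of $x$, $x\cosh(2tx)$, $x\cosh(2(t-1)x)$, $\sinh(2tx)$, $\sinh(2(t-1)x)$ and $\sinh(2x)$, hence has an explicit power series $W=\sum_{n\ge 2}\frac{2^{2n}P_n(\delta)}{(2n+1)!}x^{2n+1}$ with $\delta=t-1$, and then shows each coefficient is positive by the elementary polynomial estimate $P_n(0)=0$, $P_n'(\delta)>0$ for $\delta>0$. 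If you want to complete your argument along your own lines, you should replace the monotonicity goal by this kind of termwise-positive series for $-\varphi$ multiplied by the positive factor $6\sinh^2((t-1)x)\cosh^2(tx)/x$ (which is precisely $W/x$), i.e.\ aim at positive Taylor coefficients of the function rather than of its derivative.
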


\begin{proof}
It is convenient to write $\delta$ for $t-1$ so that $t=1+\delta$. With this notation, we have
\begin{align*}
\frac{\partial R_q}{\partial t}(x,t)&=\coth((t-1) x)-\frac{ (t-1) x}{\sinh^2((t-1)x)}
-\tanh (tx)+\frac{(q-t)x}{\cosh^2 (tx)}\\
&=\frac{\cosh x}{\sinh(\delta x)\cosh (tx)}
-\frac{\delta x}{\sinh^2(\delta x)}+\frac{(q-t)x}{\cosh^2 (t x)}\\
&=\frac{x}{\cosh^2 (tx)}\left( \frac{\cosh x\cosh (tx)}{x\sinh(\delta x)}
-\frac{\delta\cosh^2 (tx)}{\sinh^2(\delta x)}+q-t\right).
\end{align*}
Hence,
$$\frac{\partial R_q}{\partial t}(x,t)=\frac{x}{\cosh^2( tx)}(q-S(x,t)),$$
with
$$S(x,t)=t+\delta\frac{\cosh^2 (tx)}{\sinh^2(\delta x)}-\frac{\cosh x\cosh (tx)}{x\sinh(\delta x)}.$$
We come to the conclusion that
$$(t \in{\mathcal A}_q)\iff (\forall\,x>0,\quad q<S(x,t)).$$
\smallskip
Noting that $\lim_{x\to0}S(x,t)=\frac{2+4t}{3}$, we conclude that
\begin{equation*}\label{E:star}
t\in{\mathcal A}_q\Longrightarrow q\leq \frac{2+4t}{3}.\tag{1}
\end{equation*}

In what follows, we will prove that $S(x,t)>\frac{2+4t}{3}$ for all $x>0$. Given $t=1+\delta>1$ and $x>0$, let $W$ be defined by
$$W=6x \sinh^2((t-1)x)\left(S(x,t)-\frac{2+4t}{3}\right).$$
We can express $W$ as follows
\begin{align*}
W&=6x \sinh^2(\delta x)\left(\delta\frac{\cosh^2 (tx)}{\sinh^2(\delta x)}-
\frac{\cosh x\cosh (tx)}{x\sinh(\delta x)}-\frac{t+2}{3}\right)\\
&=6\delta x\cosh^2 (t x)-6\cosh x\cosh (t x)\sinh(\delta x)-2(t+2)x \sinh^2(\delta x)\\
&=x\big(4t-1+3\delta\cosh(2tx)-(t+2)\cosh(2\delta x)\big)-\frac{3}{2}\big(\sinh (2tx)+\sinh(2\delta x)-\sinh(2x)\big).
\end{align*}
It follows that
\begin{align*}
W&=x\left(4t-1+\sum_{n=0}^\infty(3\delta t^{2n}-(t+2)\delta^{2n})\frac{(2x)^{2n}}{(2n)!}\right)
-\frac{3}{2}\sum_{n=0}^\infty(t^{2n+1}+\delta^{2n+1}-1)
\frac{(2x)^{2n+1}}{(2n+1)!}\\
&=(4\delta+3)x+\sum_{n=0}^\infty\big((2n+1)(3\delta t^{2n}-(t+2)\delta^{2n})-3t^{2n+1}-3\delta^{2n+1}+3\big)\frac{2^{2n}x^{2n+1}}{(2n+1)!}
\end{align*}
Noting that the coefficients of $x$ and $x^3$ in this expansion are zero, we conclude that
$$W=\sum_{n=2}^\infty \frac{2^{2n}P_n(\delta)}{(2n+1)!}x^{2n+1},$$
where $P_n$ is the polynomial defined by
$$P_n(\delta)=(2n+1)\left(3\delta(1+\delta)^{2n}-(3+\delta)\delta^{2n}\right)-3(1+\delta)^{2n+1}
-3\delta^{2n+1}+3.$$
Clearly, $P_n(0)=0$. Moreover, we have

\begin{align*}
\frac{P_n^\prime(\delta)}{2n+1}&=
  3(1+\delta)^{2n}+6n\delta(1+\delta)^{2n-1}
  -\delta^{2n}-2n(3+\delta)\delta^{2n-1}-3(1+\delta)^{2n}-3\delta^{2n}\\
&= 2\delta\left(
  3n (1+\delta)^{2n-1}-n(3+\delta)\delta^{2n-2}-2\delta^{2n-1}
  \right)\\
&= 2\delta\left(
 3n (1+\delta)^{2n-1}-(n+2)\delta^{2n-1}-3n\delta^{2n-2}
\right)\\
&= 2\delta\big( 3n
\big((1+\delta)^{2n-1}-\delta^{2n-1}-(2n-1)\delta^{2n-2}\big)+2(n-1)\delta^{2n-1}+6n(n-1)\delta^{2n-2}
\big).
\end{align*}
Since $(1+\delta)^{m}-\delta^{m}-m\delta^{m-1}\geq0$ for $\delta\geq0$ and $m\geq1$, we conclude
that $P^\prime_n(\delta)>0$ for $\delta>0$, and consequently $P_n(\delta)>P_n(0)=0$ for $\delta>0$ and $n\geq2$. This proves that that $W>0$, and consequently,
$$\forall\,t>1,~\forall\, x>0,\qquad S(x,t)>\frac{2+4t}{3}.$$
Remembering \eqref{E:star} we see that for $t>1$ the condition $t\in {\mathcal A}_q$ is equivalent to $q\leq\frac{2+4t}{3}$, so we arrive to the following conclusion : 
$${\mathcal A}_q=\left\{
\begin{matrix}
\left[1+\frac{3}{4}(q-2),+\infty\right)&\quad\hbox{ if }& \hfill q>2,\\
\,\\
\left(1,+\infty\right)&\quad \hbox{ if } &0<q\leq 2.
\end{matrix}\right.$$
The proof of Proposition~\ref{pro26}  is complete.
\end{proof}

The following corollary is straightforward :

\begin{corollary}\label{cor27}
 Let $q$ be a positive real number, and let $R_q$ and ${\mathcal A}_q$ be  defined as in Proposition 2.6. Then, for every $k$ and $\ell$ in ${\mathcal A}_q$ such that $k<\ell$ we have
$$ \forall x>0,\qquad R_q(x,\ell)<R_q(x,k).$$
\end{corollary}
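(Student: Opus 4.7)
The plan is to use the explicit form of $\mathcal{A}_q$ established in Proposition~\ref{pro26}: namely, that $\mathcal{A}_q$ is an interval (either $[1+\tfrac{3}{4}(q-2),+\infty)$ when $q>2$, or $(1,+\infty)$ when $0<q\leq 2$). In particular, if $k,\ell\in\mathcal{A}_q$ with $k<\ell$, then the whole segment $[k,\ell]$ sits inside $\mathcal{A}_q$.

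By the very definition of $\mathcal{A}_q$, for each such $t\in[k,\ell]$ we have $\tfrac{\partial R_q}{\partial t}(x,t)<0$ for every $x>0$. Since $R_q$ is smooth in $t$ (being a combination of hyperbolic functions with a parameter $t$ that stays in a compact subinterval of $(1,+\infty)$), the fundamental theorem of calculus applies, and for each fixed $x>0$,
\begin{equation*}
R_q(x,\ell)-R_q(x,k)=\int_k^\ell \frac{\partial R_q}{\partial t}(x,t)\,dt.
\end{equation*}
The integrand is continuous in $t$ on $[k,\ell]$ and strictly negative there, so the integral is strictly negative, giving $R_q(x,\ell)<R_q(x,k)$ as desired.

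The only substantive step is the observation that $\mathcal{A}_q$ is convex (an interval), which we read off directly from the explicit description given by Proposition~\ref{pro26}. There is no real obstacle here; this is why the author calls the corollary ``straightforward''.
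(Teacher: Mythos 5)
Your proof is correct and is exactly the argument the paper leaves implicit when it calls the corollary ``straightforward'': since $\mathcal{A}_q$ is an interval by Proposition~\ref{pro26}, the segment $[k,\ell]$ lies in $\mathcal{A}_q$, so $t\mapsto R_q(x,t)$ is strictly decreasing there for each fixed $x>0$. Nothing is missing.
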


\section{The Main Theorems }\label{sec3}

In what follows the set of couples $(a,b)$ where $a$ and $b$ are positive real numbers such that $a\ne b$ will
be denoted by ${\mathcal D}$.
\smallskip

\begin{theorem}\label{th31}
Let $t$, $s$ and $p$ be nonzero real numbers with $s<t$.
For any $(a,b)$ from ${\mathcal D}$, we define the ratio of 
differences of Power Means $\rho(s,t,p;a,b)$ by
$$\rho(s,t,p;a,b)=\frac{M_s^p(a,b)-G^p(a,b)}{M_t^p(a,b)-G^p(a,b)},$$
\begin{enumerate}[\upshape (a)]
\item If $0<s<t$, then, for $p\geq\max(2s,\frac23(s+t))$, (with the exception that $p>2s$ if $t=2s$),
we have $$\forall (a,b)\in{\mathcal D},\qquad 2^{\frac{p}{t}-\frac{p}{s}}<\rho(s,t,p;a,b)<\frac{s}{t},$$
 and  for $p\leq\min(2s,\frac23(s+t))$, (with the exception that $p< 2s$ if $t=2s$), we have 
  $$\forall (a,b)\in{\mathcal D},\qquad\frac{s}{t} <\rho(s,t,p;a,b)<\min(1,2^{\frac{p}{t}-\frac{p}{s}}). $$
\item If $s<t<0$, then, for $p\geq\max(2t,\frac23(s+t))$, (with the exception that $p>2t$ if $s=2t$), we have $$\forall (a,b)\in{\mathcal D},\qquad\max(1,2^{\frac{p}{t}-\frac{p}{s}})<\rho(s,t,p;a,b)<\frac{s}{t},$$
 and  for $p\leq\min(2t,\frac23(s+t))$, (with the exception that $p<2t$ if $s=2t$),
we have $$\forall (a,b)\in{\mathcal D},\qquad\frac{s}{t}<\rho(s,t,p;a,b)<2^{\frac{p}{t}-\frac{p}{s}}.$$
\item  If $s<0<t$, then, for $p\geq\max(0,\frac23(s+t))$, (with the exception that $p>0$ if $s=-t$), we have 
                   $$\forall (a,b)\in{\mathcal D},\qquad\frac{s}{t}<\rho(s,t,p;a,b)<0,$$
        and  for $p\leq\min(0,\frac23(s+t))$, (with the exception that $p<0$ if $s=-t$), we have 
                   $$\forall (a,b)\in{\mathcal D},\qquad-\infty<\rho(s,t,p;a,b)<\frac{s}{t}.$$
\end{enumerate}
\noindent Moreover, all these inequalities are sharp.
\end{theorem}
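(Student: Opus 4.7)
My strategy is to reduce the entire theorem to Corollary~\ref{cor25} via a one-parameter substitution. Since $\rho(s,t,p;a,b)$ is symmetric in $a,b$ and invariant under $(a,b)\mapsto(\lambda a,\lambda b)$, I may assume $G(a,b)=1$ and $a>b$, and write $a=e^{x}$, $b=e^{-x}$ with $x>0$. A direct computation gives $M_r^{r}(a,b)=\cosh(rx)$ for every $r\ne 0$, so
\[
\rho(s,t,p;a,b)=\frac{(\cosh(sx))^{p/s}-1}{(\cosh(tx))^{p/t}-1}=F_{s/t,\,p/t}(|t|x),
\]
where $F_{r,q}$ is the function of Corollary~\ref{cor25} (the identity $\cosh(rx)=\cosh(|r|x)$ and the substitution $y=|t|x$ take care of the case $t<0$). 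Thus $\rho$, viewed as a function of $x\in(0,\infty)$, inherits the monotonicity of $F_{r,q}$ with $r=s/t$ and $q=p/t$.

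Next, I would translate, case by case, the hypothesis on $(s,t,p)$ into the parameter region for $(r,q)$. In case (a), $r\in(0,1)$, and dividing the hypothesis by $t>0$ gives exactly the second bullet in each part of Corollary~\ref{cor25}. In case (b), $r>1$, and because $t<0$ the division by $t$ reverses all inequalities and produces the third bullet. In case (c), $r<0$ and $t>0$ lead to the first bullet. The listed exceptions $t=2s,\ p=2s$; $s=2t,\ p=2t$; and $s=-t,\ p=0$ correspond precisely to the excluded points $(\tfrac12,1)$, $(2,2)$, $(-1,0)$ of Corollary~\ref{cor25}. This yields strict monotonicity of $\rho$ as a function of $x$ on $(0,\infty)$.

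The bounds then come from the two endpoint limits of $F_{r,q}$. A Taylor expansion of $\cosh$ gives $F_{r,q}(y)\to r=s/t$ as $y\to 0^{+}$. Using $\cosh(z)\sim e^{|z|}/2$ at infinity, one finds $\lim_{y\to\infty}F_{r,q}(y)$ according to the signs of $r$ and $q$: it equals $2^{q(1-1/r)}=2^{p/t-p/s}$ when $r,q>0$; equals $1$ when $r>0$ and $q<0$; equals $0$ when $r<0$ and $q>0$; and equals $-\infty$ when $r<0$ and $q<0$. Combining each such limit with the monotonicity direction and the value $s/t$ at $0^{+}$ yields exactly the two-sided inequality stated in the theorem. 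The $\min(1,2^{p/t-p/s})$ in case (a) and the $\max(1,2^{p/t-p/s})$ in case (b) simply record which of the two possible limits at infinity is relevant according to the sign of $p$. Sharpness is automatic because the endpoint values are attained only in the limits $a/b\to 1$ and $a/b\to+\infty$.

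The main obstacle, once one trusts Corollary~\ref{cor25} and the asymptotics of $\cosh$, is purely bookkeeping. In cases (b) and (c), the sign of $t$ inverts the inequalities inherited from Corollary~\ref{cor25}, and within each case both signs of $p$ can occur, generating four sub-situations for the limit at infinity. Carefully matching every sub-situation with the correct bullet of Corollary~\ref{cor25} and with the appropriate $\min$ or $\max$ form of the bound is where errors are most likely to creep in; everything else is routine.
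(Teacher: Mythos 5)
Your proposal is correct and follows essentially the same route as the paper: normalize to $ab=1$, write $\rho(s,t,p;a,b)=F_{s/t,\,p/t}(|t|\ln a)$, invoke Corollary~\ref{cor25} for the strict monotonicity in each of the three parameter regimes, and read off the sharp bounds from the limits of $F_{r,q}$ at $0^+$ and $+\infty$. The bookkeeping you describe (sign of $t$ reversing the inequalities on $q$, the sign of $p$ selecting which limit at infinity appears in the $\min$/$\max$) matches the paper's case analysis exactly.
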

\begin{proof}
 Let $F_{r,q}$ be the function defined in Corollary 2.5. If $F_{r,q}$ is strictly monotonous on the interval
 $(0,+\infty)$ then its bounds on this interval can be determined from its limits
at $0^+$  and at $+\infty$. In fact, it is straightforward to see that : 

$$\lim_{x\to0^+}F_{r,q}(x)=r,$$
and
$$\lim_{x\to\infty}F_{r,q}(x)=\left\{
\begin{matrix}
2^{q-q/r}&\hbox{if $r>0$ and $q>0$},\\
0&\hbox{if $r<0$ and $q>0$},\\
1&\hbox{if $r>0$ and $q<0$},\\
-\infty&\hbox{if $r<0$ and $q<0$}.
\end{matrix}
\right.$$
Now, consider two distinct positive real numbers $a$ and $b$. Since the ratio 
$\rho(s,t,p;a,b)$ is symmetric and homogeneous in $a$ and $b$,  we may suppose, without loss of generality, that $ab=1$ and $a>b$. Then, with
$$r=\frac{s}{t},\quad q=\frac{p}{t},\quad\hbox{ and }\quad x=\abs{t}\ln(a)$$
one checks easily that $\rho(s,t,p;a,b)=F_{r,q}(x)$.

\noindent(a)~ Let us suppose that $0<s<t$, then $r=\frac{s}{t}\in(0,1)$. Using Corollary 2.5 we come to the following
conclusion :\par
\begin{itemize}
\item  $F_{r,q}$ is decreasing on $(0,+\infty)$  if $q\geq\max(2r,\frac23(r+1))$ with $(r,q)\ne(\frac12,1)$. This proves
that
$$2^{\frac{p}{t}-\frac{p}{s}}<\rho(s,t,p;a,b)<\frac{s}{t}$$
if $p\geq\max(2s,\frac23(s+t))$ with the exception that $p>2s$ if $t=2s$.
\item  Also $F_{r,q}$ is increasing on $(0,+\infty)$ if $q\leq\min(2r,\frac23(r+1))$ with $(r,q)\ne(\frac12,1)$. This proves, distinguishing the cases $p<0$ and $p>0$, that
$$\frac{s}{t}<\rho(s,t,p;a,b)<\min(1,2^{\frac{p}{t}-\frac{p}{s}})$$
if $p\leq\min(2s,\frac23(s+t))$ with the exception that $p<2s$ if $t=2s$, This ends the proof of (a).
\end{itemize}
\noindent(b)~ Let us suppose that $s<t<0$, then $r=\frac{s}{t}\in(1,+\infty)$. Using Corollary 2.5 we come to the following conclusion :
\begin{itemize}
\item  $F_{r,q}$ is decreasing on $(0,+\infty)$ if $q\leq\min(2,\frac23(r+1))$ with $(r,q)\ne(2,2)$. This proves, distinguishing the cases $p<0$ and $p>0$, that
$$\max(1,2^{\frac{p}{t}-\frac{p}{s}})<\rho(s,t,p;a,b)<\frac{s}{t}$$
if $p\geq\max(2t,\frac23(s+t))$ with the exception that $p>2t$ if $s=2t$.
\item  Also $F_{r,q}$ is increasing on $(0,+\infty)$ if $q\geq\max(2,\frac23(r+1))$ with $(r,q)\ne(2,2)$. This proves that
$$\frac{s}{t}<\rho(s,t,p;a,b)<2^{\frac{p}{t}-\frac{p}{s}}$$
if $p\leq\min(2t,\frac23(s+t))$ with the exception that $p<2t$ if $s=2t$, This ends the proof of (b).
\end{itemize}
\noindent(c)~ Let us suppose that $s<0<t$, then $r=\frac{s}{t}\in(-\infty,0)$. Using Corollary 2.5 we come to the following conclusion :
\begin{itemize}
\item  $F_{r,q}$ is decreasing on $(0,+\infty)$ if $q\leq\min(0,\frac23(r+1))$ with $(r,q)\ne(-1,0)$. This proves
that
$$-\infty<\rho(s,t,p;a,b)<\frac{s}{t}$$
if $p\leq\min(0,\frac23(s+t))$ with the exception that $p<0$ if $s=-t$.
\item  Also $F_{r,q}$ is increasing on $(0,+\infty)$ if $q\geq\max(0,\frac23(r+1))$ with $(r,q)\ne(-1,0)$. This proves that
$$\frac{s}{t}<\rho(s,t,p;a,b)<0$$
if $p\geq\max(0,\frac23(s+t))$ with the exception that $p>0$ if $s=-t$, This ends the proof of (c).
\end{itemize}
\noindent The proof of Theorem~\ref{th31} is complete.
\end{proof}
\bigskip
The following Corollary corresponds to the case $p=1$.

\begin{corollary}\label{cor32}
Let $t$ and $s$ be nonzero real numbers with $s<t$.
\begin{enumerate}[\upshape (a)]
\item  If  $s<t<0$, then the following  holds
$$\forall (a,b)\in{\mathcal D},\qquad 1<\frac{M_s(a,b)-G(a,b)}{M_t(a,b)-G(a,b)}<\frac{s}{t}.$$
\item  If  $s<0<t\leq \frac32-s$, then the following holds
$$\forall (a,b)\in{\mathcal D},\qquad \frac{s}{t}<\frac{M_s(a,b)-G(a,b)}{M_t(a,b)-G(a,b)}<0.$$
\item  If one of the following conditions is satisfied
\begin{enumerate}[\upshape (i)]
\item $ 0<s<\frac12$, and $s<t\leq\frac32-s$,
\item $s=\frac12$,  and $\frac12<t<1$
\end{enumerate}
 then the following  holds
$$\forall (a,b)\in{\mathcal D},\qquad 2^{\frac1t-\frac1s}<\frac{M_s(a,b)-G(a,b)}{M_t(a,b)-G(a,b)}<\frac{s}{t}.$$
\item  If one of the following conditions is satisfied
\begin{enumerate}[\upshape (i)]
\item $ s=\frac12$, and $1<t$,
\item $\frac12<s<\frac34$, and $\frac32-s\leq t$,
\item $\frac34\leq s<t$,
\end{enumerate}
 then the following  holds
$$\forall (a,b)\in{\mathcal D},\qquad\frac{s}{t}<\frac{M_s(a,b)-G(a,b)}{M_t(a,b)-G(a,b)}<2^{\frac1t-\frac1s}.$$
\end{enumerate}
\noindent Moreover, all these inequalities are sharp.
\end{corollary}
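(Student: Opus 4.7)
The plan is to invoke Theorem~\ref{th31} with $p=1$ and translate, case by case, the conditions on $p$ into conditions on the pair $(s,t)$ alone. In every case one then simplifies the expressions $\min(1,2^{p/t-p/s})$ or $\max(1,2^{p/t-p/s})$ by reading off the sign of $1/t-1/s$ from the signs and ordering of $s$ and $t$.

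For part (a), the hypothesis $s<t<0$ places us in Theorem~\ref{th31}(b). Both $2t$ and $\frac{2}{3}(s+t)$ are negative, so $p=1\geq\max(2t,\frac{2}{3}(s+t))$ holds automatically, and the exception $p>2t$ at $s=2t$ is equally automatic. Since $s<t<0$ gives $1/s>1/t$, we have $2^{1/t-1/s}<1$ and $\max(1,2^{1/t-1/s})=1$, yielding $1<\rho<s/t$. For part (b), the hypothesis $s<0<t$ places us in Theorem~\ref{th31}(c); the requirement $p\geq\max(0,\frac{2}{3}(s+t))$ becomes exactly $t\leq 3/2-s$ when $p=1$, the exception $p>0$ at $s=-t$ being automatic, and Theorem~\ref{th31}(c) then delivers $s/t<\rho<0$.

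For parts (c) and (d), I will specialize Theorem~\ref{th31}(a). With $p=1$, the condition $p\geq\max(2s,\frac{2}{3}(s+t))$ decouples into $s\leq 1/2$ and $t\leq 3/2-s$, while $p\leq\min(2s,\frac{2}{3}(s+t))$ decouples into $s\geq 1/2$ and $t\geq 3/2-s$. For (c), the open case $s<1/2$ together with $s<t\leq 3/2-s$ gives subcase (i); at the boundary $s=1/2$, the exception "$p>2s$ if $t=2s$" excludes $t=1$, so combining $t\leq 1$ with $s<t$ and $t\neq 1$ yields the range $1/2<t<1$ of subcase (ii). Since $0<s<t$ gives $1/t<1/s$, the upper bound is just $2^{1/t-1/s}$. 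For (d), the symmetric analysis yields three subcases: $s=1/2$ with $t>1$ (the exception $t=2s=1$ being excluded), $1/2<s<3/4$ where the binding constraint is $t\geq 3/2-s$, and $s\geq 3/4$ where both $2s\geq 1$ and $\frac{2}{3}(s+t)\geq 1$ hold automatically. In each subcase $0<s<t$ forces $2^{1/t-1/s}<1$, so $\min(1,2^{1/t-1/s})=2^{1/t-1/s}$, and Theorem~\ref{th31}(a) supplies the inequality $s/t<\rho<2^{1/t-1/s}$.

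Sharpness transfers directly from the sharpness clause in Theorem~\ref{th31}. The only delicate point I expect to encounter is the boundary $s=1/2$, where the exceptions in Theorem~\ref{th31}(a) at $t=2s=1$ must be invoked to make the ranges strict, excluding $t=1$ both from (c)(ii) and from (d)(i); beyond this bit of care, the argument is pure bookkeeping.
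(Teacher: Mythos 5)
Your proposal is correct and follows exactly the route the paper intends: the paper offers no separate argument for Corollary~\ref{cor32} beyond the remark that it ``corresponds to the case $p=1$'' of Theorem~\ref{th31}, and your case-by-case translation of the conditions on $p$ into conditions on $(s,t)$, including the careful handling of the exceptional point $t=2s=1$ at $s=\frac12$, is the bookkeeping that remark leaves implicit. (One trivial slip: in part (c) the quantity $2^{1/t-1/s}$ is the lower bound and needs no $\min/\max$ simplification; this does not affect the argument.)
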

\parindent=0pt

\noindent{\bf Remark.} In particular, choosing $s=1$ or $t=1$ in the above corollary we obtain that
$$\forall\,(a,b)\in{\mathcal D},\qquad 2^{1-1/s}<\frac{M_s(a,b)-G(a,b)}{A(a,b)-G(a,b)}<s$$
if $0<s<\frac12$ or $1<s$. Further, this inequality is reversed if $\frac12<s<1$, and we have
$$\forall\,(a,b)\in{\mathcal D},\qquad s<\frac{M_s(a,b)-G(a,b)}{A(a,b)-G(a,b)}<0$$
for $s<0$. This is the conclusion of Theorem 1. from \cite{wudeb}. 

 In The next theorem, we restrict our attention to positive parameters.

\begin{theorem}\label{th33}
Let $r$, $t$, $s$ and $p$ be four real numbers satisfying the following two conditions:\par
\begin{enumerate}[\upshape (i)]
\item   $0<s<t<r$,
\item   $0<p\leq \frac{4t+2s}{3}.$.
\end{enumerate}
\noindent Then, for any positive numbers $a$ and $b$ such that $a\ne b$ we have
$$\frac{2^{-p/r}-2^{-p/s}}{2^{-p/t}-2^{-p/s}}<\frac{M_r^p(a,b)-M_s^p(a,b)}{M_t^p(a,b)-M_s^p(a,b)}<\frac{r-s}{t-s}.$$
\end{theorem}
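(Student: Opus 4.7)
The plan is to reduce, by positive homogeneity and the $a\leftrightarrow b$ symmetry, to the normalization $ab=1$, $a>b$; writing $a=e^y$, $b=e^{-y}$ with $y>0$ gives $M_\alpha^p(a,b)=(\cosh\alpha y)^{p/\alpha}$. Setting
\[
N(y,\sigma):=(\cosh\sigma y)^{p/\sigma}-(\cosh sy)^{p/s},\qquad Q(y):=\frac{N(y,r)}{N(y,t)},
\]
both inequalities will follow once I show that $Q$ is strictly decreasing on $(0,+\infty)$ and compute its two endpoint limits. Taylor expansion yields $Q(0^+)=(r-s)/(t-s)$, and the leading asymptotic $(\cosh\alpha y)^{p/\alpha}\sim 2^{-p/\alpha}e^{py}$ yields $Q(+\infty)=(2^{-p/r}-2^{-p/s})/(2^{-p/t}-2^{-p/s})$.

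The substitution $y\mapsto y/s$ and $q:=p/s$ normalises $s$ to $1$; the hypothesis $0<p\le(4t+2s)/3$ becomes $0<q\le(4(t/s)+2)/3$, which is exactly the condition $t/s\in\mathcal{A}_q$ of Proposition~\ref{pro24}. Since $\mathcal{A}_q$ is an interval with right end $+\infty$ and $r/s>t/s$, also $r/s\in\mathcal{A}_q$, and every $\sigma\in[t/s,r/s]$ lies in $\mathcal{A}_q$. Writing
\[
(\log Q)'(y)=\int_{t/s}^{r/s}\partial_\sigma\!\left[\frac{N_y(y,\sigma)}{N(y,\sigma)}\right]d\sigma,
\]
strict decrease of $Q$ reduces to strict decrease in $\sigma$ of $N_y/N$ on $\mathcal{A}_q$ for each $y>0$; by equality of mixed partials, this is the same as strict decrease in $y$ of $N_\sigma/N$ on $(0,+\infty)$ for each fixed $\sigma\in\mathcal{A}_q$.

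This last monotonicity is the main obstacle and is where Corollary~\ref{cor27} enters. With $\lambda(y,\sigma):=(\cosh\sigma y)^{q/\sigma}(\cosh y)^{-q}$, $L(y,\sigma):=\tanh\sigma y-\tanh y$, and $\mu(y,\sigma):=\partial_\sigma\log\lambda$, a direct computation gives
\[
\partial_y\!\left[\frac{N_\sigma(y,\sigma)}{N(y,\sigma)}\right]=\frac{q\,\lambda}{(\lambda-1)^2}\bigl[(\lambda-1)L_\sigma-\mu L\bigr],
\]
so it suffices to prove $(\lambda-1)L_\sigma<\mu L$ on $\mathcal{A}_q$. Using the factorisation $L=\sinh((\sigma-1)y)/(\cosh\sigma y\cosh y)$ one verifies the key identity $R_q(y,\sigma)=\partial_y\log[\lambda L]$, and a further sign comparison reduces $(\lambda-1)L_\sigma<\mu L$ to the strict negativity of $\partial_\sigma R_q(y,\sigma)$, which is exactly Corollary~\ref{cor27}. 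The Taylor expansion of $(\lambda-1)L_\sigma-\mu L$ near $y=0$ has leading coefficient proportional to $(\sigma-1)(4\sigma+2-3q)$, showing that the threshold $p\le(4t+2s)/3$ in the hypothesis is the precise one for which the required sign comparison holds for all $y>0$.
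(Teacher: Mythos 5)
Your overall route is essentially the paper's: the same normalization $ab=1$, $a=e^y$, the same reduction of $M_\alpha^p$ to $(\cosh\alpha y)^{p/\alpha}$, the same endpoint limits, and the same key input (Proposition~\ref{pro26} / Corollary~\ref{cor27}, via the translation of hypothesis (ii) into $t/s\in\mathcal A_q$ --- note that $\mathcal A_q$ is defined in Proposition~\ref{pro26}, not Proposition~\ref{pro24}). Where you differ is that you differentiate in $\sigma$ and aim at the pointwise inequality $\partial_y\partial_\sigma\log\bigl(N(y,\sigma)\bigr)<0$, then integrate over $\sigma\in[t/s,r/s]$, whereas the paper works directly with the two fixed parameters $k=t/s<\ell=r/s$.

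The one step that does not hold up as written is the claim that ``a further sign comparison reduces $(\lambda-1)L_\sigma<\mu L$ to the strict negativity of $\partial_\sigma R_q$.'' Your identity $\partial_y[N_\sigma/N]=\frac{q\lambda}{(\lambda-1)^2}\bigl[(\lambda-1)L_\sigma-\mu L\bigr]$ is correct, and the target inequality is equivalent to $L_\sigma/L<\mu/(\lambda-1)$, i.e.\ to $\lambda_{y\sigma}/\lambda_y>\lambda_\sigma/(\lambda-1)$; but both sides of this tend to the \emph{same} limit $1/(\sigma-1)$ as $y\to0^+$, so no pointwise sign comparison with $\partial_\sigma R_q<0$ can deliver it. What is missing is exactly an application of Lemma~\ref{lm21}: since $\partial_y\bigl(\lambda_{y\sigma}/\lambda_y\bigr)=\partial_\sigma\partial_y\log\lambda_y=\partial_\sigma R_q<0$, the ratio $\lambda_{y\sigma}/\lambda_y$ is decreasing in $y$, and because $\lambda_\sigma$ and $\lambda-1$ both vanish at $y=0$, the L'Hospital Monotone Rule then gives that $\lambda_\sigma/(\lambda-1)=N_\sigma/N$ is decreasing in $y$, which is your claim. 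With that inserted the argument closes. The paper avoids the $\sigma$-differentiation altogether: it applies Lemma~\ref{lm21} once to the pair $F_q(\cdot,\ell)-1$, $F_q(\cdot,k)-1$, using $R_q(x,\ell)<R_q(x,k)$ from Corollary~\ref{cor27}; that is the cleaner version of the same idea, and you lose nothing by adopting it.
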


\begin{proof} 
Let us consider, for $q>0$ the function $F_q$ defined on $[0,+\infty)\times (1,+\infty)$ by
$$F_q(x,v)=\left(\frac{(\cosh(v x))^{1/v}}{\cosh x}\right)^q.$$

Note that, for a given $v>1$, we have 
\begin{equation*}\label{E:dag}
\frac{\partial F_q}{\partial x}(x,v)=(F_q)_x^\prime(x,v)=\frac{q\sinh((v-1)x)}{\cosh x\,\cosh (v x)}F_q(x,v).\tag{2}
\end{equation*}
This proves that $x\mapsto F_q(x,v)$ is increasing on $[0,+\infty)$, it defines a bijection from
$[0,+\infty)$ onto $[1,2^{q-q/v})$. 

From \eqref{E:dag} we conclude that, for $x>0$ we have
$$\frac{(F_q)^{\prime\prime}_{xx}(x,v)}{(F_q)^\prime_x(x,v)}
= (v-1)\coth((v-1)x)-(q+1)\tanh(x)+(q-v)\tanh (vx).$$
That is
$$\frac{(F_q)^{\prime\prime}_{xx}(x,v)}{(F_q)^\prime_x(x,v)}=R_q(x,v)$$
where $R_q$ is the function defined in Proposition~\ref{pro26}.

Now, consider two elements $k$ and $\ell$  from ${\mathcal A}_q$ such that $k<\ell$. Using Corollary~\ref{cor27}, we conclude that
$$\forall\,x>0,\quad
\frac{(F_q)^{\prime\prime}_{xx}(x,\ell)}{(F_q)^\prime_x(x,\ell)}<\frac{(F_q)^{\prime\prime}_{xx}(x,k)}{(F_q)^\prime_x(x,k)},
$$
or,
$$\forall\,x>0,\quad
\frac{\partial}{\partial x}\left(\frac{(F_q)^{\prime}_x(x,\ell)}{(F_q)^{\prime}_x(x,k)}\right)<0.$$
It follows that 
$x\mapsto (F_q)^{\prime}_x(x,\ell)/(F_q)^{\prime}_x(x,k)$ is decreasing on $(0,+\infty)$, and by Lemma~\ref{lm21}, we arrive to the conclusion that
$x\mapsto \frac{F_q(x,\ell)-F_q(0,\ell)}{F_q(x,k)-F_q(0,k)}$ is  decreasing on $(0,+\infty)$.

Noting that
$$\lim_{x\to0^+}\frac{F_q(x,\ell)-1}{F_q(x,k)-1}=\frac{\ell-1}{k-1}
\quad\hbox{and}\quad
\lim_{x\to+\infty}\frac{F_q(x,\ell)-1}{F_q(x,k)-1}=\frac{2^{q-q/\ell}-1}{2^{q-q/k}-1}$$
 we conclude that, for every $x>0$ we have
\begin{equation*}\label{E:ddag}
\frac{2^{q-q/\ell}-1}{2^{q-q/k}-1}<\frac{F_q(x,\ell)-1}{F_q(x,k)-1}<\frac{\ell-1}{k-1}.\tag{3}
\end{equation*}
which is valid for every $k$ and $\ell$  from ${\mathcal A}_q$ such that $k<\ell$.

Consider, now, two distinct positive real numbers $a$ and $b$. 
Since the ratio 
$$\frac{M_r^p(a,b)-M_s^p(a,b)}{M_t^p(a,b)-M_s^p(a,b)}$$ is symmetric in $a$ and $b$ and homogeneous, without loss of generality, we may suppose that $ab=1$ and $a>b$. Then, we define
$$\ell=\frac{r}{s},\quad k=\frac{t}{s},\quad q=\frac{p}{s},\quad\hbox{ and }\quad x=\ln(a^s)$$
and we apply \eqref{E:ddag} to this data to obtain the desired conclusion.
\end{proof}
 
\noindent{\bf Remarks.} 

\begin{itemize}
\item   Theorem 2 in \cite{wudeb} gives the same inequality, with our condition (ii) replaced by a stronger condition, namely $p\in(0,t]$. Therefore, our Theorem~\ref{th33} is a refinement upon that theorem.
\item  Setting $t=1$ and letting $s$ tend to $0^+$ in Theorem~\ref{th33}, we obtain 
$$\forall(a,b)\in{\mathcal D},\qquad 2^{p-p/r} \leq\frac{M_r^p(a,b)-G^p(a,b)}{A^p(a,b)-G^p(a,b)}\leq r.$$
for $0<p\leq 4/3$, $r>1$.
In fact, this follows also -with strict inequalities- from Theorem~\ref{th31}(a).
 
\item  Again, Setting $p=1$ and letting $s$ tend to $0^+$ in Theorem~\ref{th33},  we obtain 
$$\forall(a,b)\in{\mathcal D},\qquad 2^{1/t-1/r} \leq\frac{M_r(a,b)-G(a,b)}{M_t(a,b)-G(a,b)}\leq \frac{r}{t}.$$
for $r>t\geq 3/4$. This follows also -with strict inequalities-  from Corollary~\ref{cor32}(d).
\end{itemize}
\bigskip

\end{document}